\documentclass[12pt,reqno]{amsart}
\usepackage{amssymb, a4wide, amsmath, mathtools,xy}
\xyoption{all}
\usepackage{latexsym,pdfsync,xcolor,graphicx}
\usepackage{multirow}

\usepackage[T1]{fontenc}

\usepackage[utf8]{inputenc}

\usepackage{array}
\usepackage{upgreek}
\usepackage{pstricks-add}
\usepackage{enumerate}
\usepackage{orcidlink}

\usepackage[english]{babel}	
\usepackage{comment}
\allowdisplaybreaks

\usepackage{pgf,tikz}
\usepackage{wrapfig}
\usetikzlibrary{arrows}

\usepackage{float}
\usepackage{appendix}

\usepackage{hyperref}
\usepackage{rotating}

\theoremstyle{plain}%
\newtheorem{theorem}{Theorem}[section]% meant for sectionwise numbers
\newtheorem{proposition}[theorem]{Proposition}%

\theoremstyle{definition}
\newtheorem{example}[theorem]{Example}%

\theoremstyle{remark}
\newtheorem{remark}[theorem]{Remark}
\newcommand{\E}{\mathcal{E}}

\DeclareMathOperator{\ann}{ann}
\DeclareMathOperator{\spa}{\textup{\textsf{span}}}

\DeclareMathOperator{\rank}{rank}
\DeclareMathOperator{\supp}{supp}

\numberwithin{equation}{section}
\title[Further results on modularity in evolution algebras]{Further results on modularity in evolution algebras}

\author[M. Ladra]{Manuel Ladra\textsuperscript{1}\,\orcidlink{0000-0002-0543-4508}}
\address{\textsuperscript{1}Department of Mathematics \& CITMAga, Universidade de Santiago de Compostela, 15782 Santiago de Compostela, Spain}
\email{manuel.ladra@usc.es, andresperez.rodriguez@usc.es}

\author[A. Pérez-Rodríguez]{Andrés Pérez-Rodríguez\textsuperscript{1}\,\orcidlink{0009-0007-1095-5328}}
\subjclass{17D92, 17A60, 06C05, 17B30}

\keywords{Evolution algebras, nilpotent evolution algebra, modular lattice, complete evolution algebra, supersolvability}

\begin{document}
	
	\begin{abstract}
	In this paper, we study modularity in the context of evolution algebras. Although this property has been previously considered, a complete description is still missing in several natural settings. In particular, we obtain a full characterisation of modular evolution algebras in the nilpotent case and in the class of supersolvable regular evolution algebras.
	\end{abstract}

\maketitle

\section{Introduction}

Evolution algebras are commutative but nonassociative algebras introduced in 2006 by J.~P.~Tian and P.~Vojt\v{e}chovsk\'y \cite{TV_06}, motivated by models from non-Mendelian genetics. Their multiplication reflects the self-reproduction mechanism of genetic inheritance in asexual reproduction. A systematic study of these algebras was later developed in the monograph by Tian \cite{Tian_08}, which established the foundations of the theory and stimulated further research on their structural properties. A distinctive feature of evolution algebras is that, unlike most classical nonassociative algebraic varieties, they are not defined by polynomial identities but rather by the existence of a distinguished basis in which the product of two distinct basis elements is zero. Consequently, their study requires different tools and approaches. Despite these difficulties, evolution algebras have been widely investigated, including the study of their ideals, simplicity and semisimplicity, and their connections with graph theory (see, for instance, \cite{BCS_22,CKM_19,CMMT_25,EL_15}). 

One natural way to study the internal structure of an evolution algebra is through its subalgebra lattice. Actually, the relationship between an algebraic structure and the lattice formed by its substructures has long been an important theme in algebra. Early developments can be traced back to the work of Richard Dedekind, who first introduced the modular law (also called the Dedekind law). A lattice $(\mathcal{L},\vee,\wedge)$ is said to be \textit{modular} if, for every $x,y,z\in\mathcal{L}$ with $x\leq z$, one has
\begin{equation}\label{eq:mod}
	x\vee (y\wedge z)= (x\vee y)\wedge z.
\end{equation}
Equivalently, a lattice is modular if its Hasse diagram does not contain the pentagon lattice $N_5$ as a sublattice (see \cite[Theorem~2.1.2]{Gr_98}).

Modularity has been widely studied in several algebraic contexts. For instance, groups whose subgroup lattice is modular are known as $M$-groups. Moreover, the lattice of normal subgroups of any group is modular, and consequently, the subgroup lattice of an abelian group is modular as well. A similar phenomenon appears in module theory, where the lattice of submodules of a module over a ring is always modular. Related questions have also been considered in nonassociative settings such as Lie algebras (see \cite{Ge_76,Ko_65}), Leibniz algebras (see \cite{ST_22}), and restricted Lie algebras (see \cite{MPS_21,PST_23}).

Motivated by these developments, the study of modularity in the context of evolution algebras was initiated in \cite{LPP_25}. In that work, a necessary condition for modularity in the nilpotent case was obtained in terms of the existence of an absolute nilpotent element (see \cite[Proposition~5.4]{LPP_25}). Moreover, modularity was completely characterised in two extreme solvable cases: evolution algebras with one-dimensional derived subalgebra (see \cite[Lemma~6.1]{LPP_25}) and evolution algebras with maximum index of solvability (see \cite[Corollary~6.11]{LPP_25}). The aim of the present paper is to continue this line of research. After this introduction and the preliminary Section~\ref{sec:2}, we study modularity in the nilpotent case over $\mathbb{C}$ in Section~\ref{sec:3}, showing a close connection between this property and completeness. Finally, Section~\ref{sec:4} provides a complete characterisation of modularity for a particular class of regular evolution algebras, namely the supersolvable ones.

\section{Preliminaries}\label{sec:2}
An \textit{evolution algebra} over $\mathbb{K}$ is a $\mathbb{K}$-algebra $\mathcal{E}$ that admits a basis $B=\{e_1,\dots,e_n,\dots\}$, called a \textit{natural basis}, such that $e_ie_j=0$ for all $i\neq j$. In this note, we focus on finite-dimensional evolution algebras, meaning that $B$ is a finite set. For a given natural basis $B=\{e_1,\dots,e_n\}$ in $\mathcal{E}$, the scalars $a_{ij}\in\mathbb{K}$ satisfying $e_i^2=\sum_{j=1}^na_{ij}e_j$ are called the \textit{structure constants} of $\mathcal{E}$ relative to $B$. The matrix $M_B(\mathcal{E})=(a_{ij})_{i,j=1}^n$ is said to be the \textit{structure matrix} of $\mathcal{E}$ relative to $B$. Moreover, recall that the \textit{annihilator} of an evolution algebra $\mathcal{E}$ is characterised by \cite[Proposition~1.5.3]{thesis_yolanda},
\[\ann(\mathcal{E})\coloneqq\{u\in\mathcal{E}\colon u\mathcal{E}=0\}=\spa\{e_i\in B\colon e_i^2=0\}.\]

Given an evolution algebra $\mathcal{E}$ with a natural basis $B=\{e_1,\dots,e_n\}$ and an element $u=\sum_{i=1}^n\mu_ie_i\in\mathcal{E}$, we define its \textit{support} relative to $B$ as $\supp(u)\coloneqq\{i:\mu_i\neq0\}$. It is also known that evolution algebras are closed under quotients by ideals (see \cite[Lemma~1.4.11]{thesis_yolanda}). Let $\E$ be an evolution algebra with natural basis $B=\{e_1,\dots,e_n\}$ and $I$ an ideal of $\E$. Then, although $$B_{\E/I}\coloneqq\{\overline{e}=e+I\colon e\in B,\ e\notin I\}$$
is not necessarily a natural basis of $\E/I$, it always contains a natural basis of the quotient algebra (see \cite[Remark 1.4.12]{thesis_yolanda}). We recall that an ideal $I$ of $\E$ is called a \emph{basic ideal} if it admits a natural basis
consisting of vectors from the natural basis. Particularly, if we have a basic ideal $I=\spa\{e_i\colon i\in\varLambda\}$, with $\varLambda\subset\{1,\dots,n\}$, we will consider the set $B_{\E/I}=\{\overline{e_i}\colon i\notin\varLambda\}$ as the natural basis of the quotient algebra $\E/I$.

In this paper, we will mainly work with two opposite types of evolution algebras: nilpotent and regular. An evolution algebra is said to be \textit{nilpotent} if it admits a strictly triangular (upper or lower) structure matrix (see~\cite[Theorem~2.7]{CLOR_14}).  Hence, throughout this paper we assume that the structure matrix of a nilpotent evolution algebra $\E$ is 
\begin{equation}\label{eq:matrix}
	M_B(\E)=
	\left(
	\begin{matrix}
		0 & a_{12} & a_{13} & \dots & a_{1(k+1)} & \dots & a_{1n} \\
		0 & 0 & a_{23} & \dots & a_{2(k+1)} & \dots & a_{2n} \\
		\vdots & \vdots & \vdots & \ddots & \vdots & \ddots & \vdots \\
		0 & 0 & 0 & \dots & a_{k(k+1)} & \dots & a_{kn} \\
		0 & 0 & 0 & \dots & 0 & \dots & 0 \\
		\vdots & \vdots & \vdots & \ddots & \vdots & \ddots & \vdots \\
		0 & 0 & 0 & \dots & 0 & \dots & 0
	\end{matrix}
	\right),
\end{equation}
where the first $k$ rows are nonzero.

 At the other extreme are regular evolution algebras. An evolution algebra $\E$ is called \textit{regular} if $\E=\E^2$, or equivalently, if the matrix $M_B(\E)$ is nonsingular.
Several results concerning subalgebras of regular evolution algebras will be used throughout the paper. In particular, as proved in \cite[Lemma~1]{LP_25_regular}, one-dimensional subalgebras correspond to the nontrivial solutions of the system
\begin{equation*}\label{sist}
	\begin{pmatrix}
		x_1^2 \\ \vdots \\ x_n^2
	\end{pmatrix}
	=
	\big(M_B(\E)^t\big)^{-1}
	\begin{pmatrix}
		x_1 \\ \vdots \\ x_n
	\end{pmatrix}.
\end{equation*}
 Moreover, although evolution algebras are not closed under taking subalgebras in general, this property holds in the regular case; that is, every subalgebra of a regular evolution algebra admits a natural basis (see~\cite[Theorem~2]{LP_25_regular}). Finally, as stated in \cite[Corollary~7]{LP_25_regular}, if $\E$ is a three-dimensional regular evolution algebra over an algebraically closed field, a subspace $\spa\{e_i,v\}$ with $v\in\spa\{e_p,e_q\}$ and $i\neq p,q$ is a subalgebra if and only if
\[
a_{ip}^2a_{iq}a_{pp}+a_{iq}^3a_{qp}
=
a_{ip}^3a_{pq}+a_{ip}a_{iq}^2a_{qq}.
\]

For the reader's convenience, we conclude this preliminary section by recalling some lattice-theoretical notions that will appear in our study in the framework of evolution algebras. Let $\mathcal{E}$ be an evolution algebra. Following \eqref{eq:mod}, we say that $\mathcal{E}$ is \textit{modular} if $
\langle U,V\cap W\rangle=\langle U,V\rangle\cap W$
for all subalgebras $U,V,W$ of $\mathcal{E}$ with $U\subseteq W$. Note that modularity is preserved under subalgebras  and quotients. To study modularity we will use the following result, already essential in \cite{LPP_25}.

\begin{proposition}[{\cite[Theorem 2.2]{An_94}}]\label{prop:equiv_mod_qi}
Let $\mathcal{E}$ be a finite-dimensional solvable (not necessarily evolution) algebra. Then, $\E$ is modular if and only if every subalgebra of $\mathcal{E}$ is a quasi-ideal.
\end{proposition}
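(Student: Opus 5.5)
For the easier direction, suppose every subalgebra of $\E$ is a quasi-ideal, i.e. $\langle U,V\rangle=U+V$ for all subalgebras $U,V$. Then the join in the subalgebra lattice is the subspace sum, and the meet is always the intersection. So the subalgebra lattice is a sublattice of the lattice of subspaces of $\E$. The latter satisfies the Dedekind law: for subspaces $U\subseteq W$ one has $U+(V\cap W)=(U+V)\cap W$. Hence \eqref{eq:mod} holds for subalgebras and $\E$ is modular. Solvability is not needed here.

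For the converse, the plan is to use a dimension-function argument, in which the lattice height of a subalgebra equals its vector-space dimension.

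First, I will construct one maximal chain of subalgebras of $\E$ of length $n=\dim\E$. Let $\E^{(0)}=\E$ and $\E^{(k+1)}=\E^{(k)}\E^{(k)}$; by solvability some $\E^{(m)}=0$. If $V$ is any subspace with $\E^{(k+1)}\subseteq V\subseteq \E^{(k)}$, then $VV\subseteq \E^{(k)}\E^{(k)}=\E^{(k+1)}\subseteq V$, so $V$ is a subalgebra. Refining each step of the derived series by a flag of subspaces therefore gives a chain of subalgebras $0=S_0\subset S_1\subset\dots\subset S_n=\E$ with $\dim S_i=i$, and it is clearly maximal.

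Next, since $\E$ is modular and finite dimensional, its subalgebra lattice has finite length and satisfies the Jordan--Dedekind chain condition. Hence every maximal chain from $0$ to $\E$ has length exactly $n$. Now let $U\subset V$ be a covering pair of subalgebras and extend it to a maximal chain from $0$ to $\E$. That chain has $n$ steps, each of which increases the dimension by at least $1$, and the total increase is $n$. So every step, in particular $U\subset V$, increases the dimension by exactly one. Consequently the height function $h$ of the lattice satisfies $h(U)=\dim U$ for every subalgebra $U$. I expect this identification of height with dimension to be the main point; it is exactly where solvability enters, through the derived-series chain. In a non-solvable algebra there may be no maximal chain with one-dimensional steps.

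Finally, in a modular lattice of finite length the height function satisfies $h(U\vee V)+h(U\wedge V)=h(U)+h(V)$. Translating via $h=\dim$ gives
\[
\dim\langle U,V\rangle=\dim U+\dim V-\dim(U\cap V)=\dim(U+V).
\]
Since $U+V\subseteq\langle U,V\rangle$, we conclude $\langle U,V\rangle=U+V$. Thus every subalgebra is a quasi-ideal, which completes the proof.
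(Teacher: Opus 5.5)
Your proof is correct and is essentially the argument the paper relies on. The paper only cites Anquela's theorem, noting in Remark~\ref{rem:flag} that its proof uses nothing beyond a complete flag of subalgebras; your derived-series refinement supplies exactly that flag, and the Jordan--Dedekind, height-equals-dimension and valuation steps then also give the extension stated in that remark, since solvability is used only to produce the flag.
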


Recall that a subalgebra $U$ of $\mathcal{E}$ is called a \textit{quasi-ideal} of $\mathcal{E}$ if 
$\langle U,V\rangle=U+V$
for every subalgebra $V$ of $\mathcal{E}$. Clearly, every ideal is a quasi-ideal.

\begin{remark}\label{rem:flag}
The proof of Proposition~\ref{prop:equiv_mod_qi} relies on the existence of a \textit{complete flag of subalgebras}, that is, a chain of subalgebras
\[
0=U_0\subsetneq U_1\subsetneq\cdots\subsetneq U_n=\mathcal{E},
\]
such that $\dim U_i=i$ for all $0\le i\le n$. Consequently, the equivalence in Proposition~\ref{prop:equiv_mod_qi} remains valid for any finite-dimensional algebra admitting such a complete flag of subalgebras.
\end{remark}

\section{Completeness as a sufficient condition for modularity}\label{sec:3}
As already mentioned before, evolution algebras are not, in general, closed under taking subalgebras. Consequently, besides ordinary subalgebras and subalgebras admitting a natural basis, it is natural to consider subalgebras that admit a natural basis which can be extended to a natural basis of the whole algebra. This latter notion corresponds to the original concept of evolution subalgebra introduced by Tian in his monograph \cite[Definition~4, p.~23]{Tian_08}. However, in this article, we adopt the terminology from \cite[Definition~1.4.3]{thesis_yolanda}, as it is now the most commonly used in the literature. Accordingly, a subalgebra is called an \textit{evolution subalgebra} if it admits a natural basis, and we say that it has the \textit{extension property} if there exists a natural basis of the subalgebra that can be extended to a natural basis of the whole algebra.

As introduced in \cite{CKO_19}, an evolution algebra $\E$ is said to be \textit{complete} if every subalgebra of $\E$ is an evolution subalgebra with the extension property. Moreover, as shown in \cite[Proposition~2, p.~24]{Tian_08}, any such subalgebra is an ideal, and therefore a quasi-ideal. This observation suggests a close connection between completeness and modularity. 
\begin{proposition}\label{prop:comp_imp_mod}
	Let $\E$ be a complex evolution algebra. If $\E$ is complete, then $\E$ is modular.
\end{proposition}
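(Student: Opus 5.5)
Since every subalgebra of a complete $\E$ is an evolution subalgebra with the extension property, hence an ideal by \cite[Proposition~2, p.~24]{Tian_08}, the natural route is through Proposition~\ref{prop:equiv_mod_qi} and Remark~\ref{rem:flag}: every ideal is a quasi-ideal, so it only remains to justify that the equivalence ``modular $\iff$ every subalgebra is a quasi-ideal'' applies to $\E$. Alternatively, and more directly, the lattice of subalgebras of $\E$ then coincides with the lattice of ideals of $\E$. In that lattice the join is the sum, since $I+J$ is already an ideal and hence $\langle I,J\rangle=I+J$, and the meet is the intersection. This is a sublattice of the lattice of subspaces of $\E$, which is modular. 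I would therefore give this direct argument as the main proof: for subalgebras $U\subseteq W$ and $V$,
\[
\langle U,V\cap W\rangle=U+(V\cap W)=(U+V)\cap W=\langle U,V\rangle\cap W,
\]
where the middle equality is Dedekind's modular law for subspaces, using $U\subseteq W$.

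The key steps, in order, are as follows. First, from completeness, every subalgebra $U$ admits a natural basis extendable to a natural basis of $\E$. Second, invoke \cite[Proposition~2, p.~24]{Tian_08} to conclude that $U$ is an ideal. Third, observe that the subalgebra generated by two ideals is their sum, since the sum of ideals is an ideal and in particular a subalgebra. Fourth, apply the modular law for vector subspaces.

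The only potential obstacle is making sure that the joins really are sums. This is exactly the quasi-ideal property, and it is automatic here because everything is an ideal. The hypothesis that the field is $\mathbb{C}$ seems unnecessary for this argument. It presumably reflects the context of the section, where completeness is compared with modularity for nilpotent complex algebras, or perhaps an intention to route the proof through Proposition~\ref{prop:equiv_mod_qi}, which needs either solvability or a complete flag (Remark~\ref{rem:flag}). If I wanted to follow that route instead, I would argue as follows. Over $\mathbb{C}$, any nonzero ideal $I$ either has $I^2\neq I$ or contains idempotent-generated one-dimensional subalgebras. Each of these is a subalgebra, hence an ideal, hence of dimension at least one. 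Building a chain of ideals of each dimension would then give the complete flag. The direct argument avoids this step entirely, so I would present it and remark that the hypothesis on $\mathbb{C}$ is not used.
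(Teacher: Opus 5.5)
Your proof is correct, and it is more economical than the paper's.

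The paper first invokes the classification of complete complex evolution algebras (\cite{CKO_19}, completed by \cite{GP_25_complete}). It then checks that every algebra in the resulting list admits a complete flag of subalgebras. Only after that does it apply Proposition~\ref{prop:equiv_mod_qi} through Remark~\ref{rem:flag}, to pass from ``every subalgebra is an ideal, hence a quasi-ideal'' to modularity.

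You notice that this step is the trivial direction of that equivalence. Once $\langle U,V\rangle=U+V$ for all subalgebras $U,V$, the subalgebra lattice is a sublattice of the subspace lattice, and Dedekind's law for subspaces finishes the job. No solvability, flag or classification is needed. The flag hypothesis is only required for the converse implication (modular $\Rightarrow$ every subalgebra is a quasi-ideal), which the paper uses elsewhere, e.g.\ in Propositions~\ref{prop:nec_mod} and~\ref{prop:cond_mod}.

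As a result, your argument proves the statement over an arbitrary field. It rests only on \cite[Proposition~2, p.~24]{Tian_08}, which is itself a one-line check: if a natural basis $\{u_1,\dots,u_m\}$ of $U$ extends to a natural basis $\{u_1,\dots,u_n\}$ of $\E$, then $u_i\E=\mathbb{K}u_i^2\subseteq U$ for $i\le m$. For this particular statement, the paper's route only places the result within the classification; it adds nothing to the proof.

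The alternative flag construction you sketch at the end is too imprecise to stand on its own. For example, $I^2\neq I$ does not by itself yield a one-dimensional subalgebra. Since your main argument never uses it, you are right to discard it.
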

\begin{proof}
If $\E$ is complete, then, using the classification theorems first established in part and further conjectured in \cite[Theorem~4.2 and Conjectures~5.2 \&~5.3]{CKO_19} and later fully proved in \cite[Corollary~3.4]{GP_25_complete}, it is isomorphic to one of the following pairwise nonisomorphic algebras:
	\[\{e_1^2=e_1\},\qquad\widetilde{\E}\oplus\mathbb{C}^{n-k},\qquad\{e_1^2=e_1\}\oplus \mathbb{C}^{n-1}\qquad\text{or}\qquad\{e_1^2=e_1\}\oplus \widetilde{\E}\oplus\mathbb{C}^{n-k-1},\]
	where $\widetilde{\E}$ is a $k$-dimensional evolution algebra with maximal index of nilpotency and $\mathbb{C}^s$ denotes the $s$-dimensional zero evolution algebra over $\mathbb{C}$. It is easy to check that all these algebras admit a complete flag of subalgebras. Then, by Proposition~\ref{prop:equiv_mod_qi} and Remark~\ref{rem:flag}, the result follows.
\end{proof}

Although completeness is a sufficient condition for modularity, it is not necessary in general.
\begin{example}[{\cite[Example~5.3]{LPP_25}}]
Let $\E$ be the evolution algebra over $\mathbb{C}$ with natural basis $\{e_1,e_2,e_3\}$ and multiplication given by $e_1^2=-e_2^2=e_1+e_2$ and $e_3^2=e_2$. The nonzero proper subalgebras of $\E$ are $\spa\{e_1+e_2\}$, $\spa\{e_1-e_2\}$ and $\spa\{e_1,e_2\}$. Note that $\E$ is solvable and all the above subalgebras are quasi-ideals, so $\E$ is modular. However, it is easy to check that $e_1+e_2$ cannot be extended to a natural basis of the whole algebra (see~\cite[Theorem~2.4]{BCS_22}).
\end{example}
Our aim now is to prove that completeness and modularity are equivalent in the context of nilpotent evolution algebras over $\mathbb{C}$. 
 To this end, we recall \cite[Proposition~5.4]{LPP_25}, which states that if $\E$ is a nilpotent evolution algebra over a field of characteristic not $2$ and $\E$ is modular, then there is no absolute nilpotent element outside the annihilator. Over $\mathbb{C}$ (or, more generally, over a quadratically closed field of characteristic not $2$), this result can be equivalently reformulated as follows: if $\E$ is a modular evolution algebra, then, assuming without loss of generality that its structure matrix is~\eqref{eq:matrix}, we have that $\rank\big(M_B(\E)\big)=k$. Indeed, $\rank\big(M_B(\E)\big)<k$ if and only if there exists a nonzero linear relation
 \[
 \alpha_1 e_1^2 + \cdots + \alpha_k e_k^2 = 0,
 \]
 which yields the absolute nilpotent element $\sqrt{\alpha_1}e_1 + \cdots + \sqrt{\alpha_k}e_k$.

\begin{proposition}\label{prop:nec_mod}
Let $\E$ be a nilpotent evolution algebra over $\mathbb{C}$ (or, more generally, over a quadratically closed field of characteristic not $2$). If $\E$ is modular, then there is no element $u\in\E$ such that:
\begin{itemize}
	\item $|\supp(u)|\geq 2$,
	\item $\supp(u)\cap\supp(\ann(\E))=\emptyset$, and
	\item $u^2\in\ann(\E)$.
\end{itemize}
\end{proposition}
\begin{proof}
	Consider a natural basis $B=\{e_1,\dots,e_n\}$ such that the structure matrix satisfies~\eqref{eq:matrix}. Suppose, for contradiction, that there exists an element $u\in\E$ satisfying the above conditions. Then  $u=\alpha_{1}e_{1}+\dots+\alpha_{l}e_{l}$, where $\alpha_l\neq0$, $l\leq k$, $|\supp_B(u)|\geq2$, and $u^2\in\ann(\E)$. 
	Observe that, by \cite[Proposition~5.4]{LPP_25}, we necessarily have that $\rank\big(M_B(\mathcal{E})\big)=k$. Consequently,
	$0\neq u^2\in\ann(\E)$.  We distinguish the two following cases:
	\begin{enumerate}
		\item There exist at least two indices $1\leq i,j\leq k$ such that $e_i^2,e_j^2\in\ann(\E)$. Consider the elements $u=e_i+e_j$ and $v=e_i-e_j$. Since $\rank(M_B(\E))=k$, we have that $0\neq u^2=v^2\in\ann(\E)$. Then, it suffices to check that the subalgebras  $\spa\{u,u^2\}$ and $\spa\{v,u^2\}$ are not quasi-ideals. Indeed,
		\[\langle u,v,u^2\rangle=\langle e_i,e_j,u^2\rangle\neq\spa\{e_i,e_j,u^2\},\]
		which follows from the fact that $e_i^2$ and $e_j^2$ are linearly independent elements of $\ann(\E)$, and therefore both cannot be scalar multiples of $u^2$.
		\item The only index satisfying $e_k^2\in\ann(\E)$ is $k$. In this case, we may assume that the element $u$ is of the form $\alpha_1e_1+\dots+\alpha_le_l$ with $l<k$. Consider the quotient algebra $\E/\ann(E)$, with natural basis $B_{\E/\ann(\E)}=\{\overline{e_1},\dots,\overline{e_k}\}$. Then, $u$ becomes an absolute nilpotent element in $\E/\ann(\E)$ outside $\ann\big(\E/\ann(\E)\big)=\spa\{\overline{e_k}\}$, and therefore this quotient algebra is not modular by \cite[Proposition~5.4]{LPP_25}. Since modularity is preserved under quotients, we obtain a contradiction.
	\end{enumerate}
	In either case, we obtain that 
	$\E$ is not modular. Therefore, the result follows.
\end{proof}

\begin{remark}
The previous result may fail over fields which are not quadratically closed. 
For instance, consider the evolution algebra $\E$ over $\mathbb{R}$ with natural basis $\{e_1,e_2,e_3\}$ and multiplication given by $e_1^2=e_2^2=e_3$ and $e_3^2=0$ (see \cite[Example~5.6]{LPP_25}). In this case, the element $u=e_1+e_2$ satisfies the conditions of the previous result. Nevertheless, the algebra $\E$ turns out to be modular, as shown by its lattice of subalgebras below.
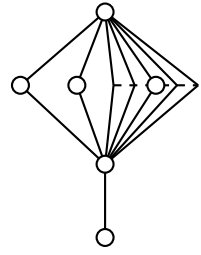
\begin{figure}
	\begin{center}
		\begin{minipage}{0.65\textwidth}
		%\hspace{-0.2cm}
		\begin{center}
			%\vspace{-0.5cm}
			\begin{tabular}{| l | l |}
				\hline
				\textbf{Subalg. of dim. $1$} & \textbf{Subalg. of dim. $2$}  \\
				\hline
				$\spa\{e_3\}$  & $\spa\{e_1,e_3\}$ \\
				& $\spa\{e_2,e_3\}$  \\
				& $\spa\{e_1+\alpha e_2,e_3\},\alpha\in\mathbb{R}^*$\\
				\hline
			\end{tabular}
		\end{center}
	\end{minipage}
	\begin{minipage}{0.25\textwidth}
		\begin{center}%\hspace{-0.6cm}
			\begin{tikzpicture}[scale=0.75]
				\draw[dashed,thick]  (4.65,0.7) -- (6.15,0.7);
				\draw[thick] (4.5,-0.7) -- (4.5,-2);
				\draw[thick] (4.5,2) -- (3,0.7);
				\draw[thick] (4.5,2) -- (4,0.7);
				\draw[thick] (4.5,2) -- (5.4,0.7);
				\draw[thick] (4.5,2) -- (4.65,0.7);
				\draw[thick] (4.5,2) -- (6.15,0.7);
				\draw[thick] (4.5,2) -- (5.025,0.7);
				\draw[thick] (4.5,2) -- (5.775,0.7);
				\draw[thick] (4.5,-0.7) -- (3,0.7);
				\draw[thick] (4.5,-0.7) -- (4,0.7);
				\draw[thick] (4.5,-0.7) -- (5.4,0.7);
				\draw[thick] (4.5,-0.7) -- (4.65,0.7);
				\draw[thick] (4.5,-0.7) -- (6.15,0.7);
				\draw[thick] (4.5,-0.7) -- (5.025,0.7);
				\draw[thick] (4.5,-0.7) -- (5.775,0.7);

				\draw[fill=white,thick=black] (4.5,-2) circle [radius=0.15];
				\draw[fill=white,thick=black](4.5,-0.7) circle [radius=0.15];
				\draw[fill=white,thick=black](5.4,0.7) circle [radius=0.15];
				\draw[fill=white,thick=black](3,0.7) circle [radius=0.15];
				\draw[fill=white,thick=black](4,0.7) circle [radius=0.15];
				\draw[fill=white,thick=black](4.5,2) circle [radius=0.15];			
			\end{tikzpicture}
		\end{center}
	\end{minipage}
	\end{center}
	\caption{Proper subalgebras and subalgebra lattice of the real evolution algebra with product $e_1^2=e_2^2=e_3$ and $e_3^2=0$.}
\end{figure}
	
\end{remark}

\begin{theorem}\label{th:equiv_mod}
	Let $\E$ be a nilpotent evolution algebra of dimension $n$ over $\mathbb{C}$. Then, the following statements are equivalent:
	\begin{enumerate}[\rm (i)]
		\item $\E$ is modular;
		\item $\E$ is isomorphic to $\widetilde{\E}\oplus\mathbb{C}^{n-k}$, where $\widetilde{\E}$ is a $k$-dimensional evolution algebra with maximum index of nilpotency, and $\mathbb{C}^{n-k}$ denotes the $(n-k)$-dimensional zero evolution algebra over $\mathbb{C}$; and
		\item $\E$ is complete.
	\end{enumerate}
\end{theorem}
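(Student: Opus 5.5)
We prove (i)$\Rightarrow$(ii)$\Rightarrow$(iii)$\Rightarrow$(i). The implication (iii)$\Rightarrow$(i) is Proposition~\ref{prop:comp_imp_mod}. For (ii)$\Rightarrow$(iii) we use the classification in \cite[Corollary~3.4]{GP_25_complete}, which lists $\widetilde{\E}\oplus\mathbb{C}^{n-k}$ among the complete algebras. If a self-contained argument is preferred, we argue directly. Take $\widetilde{\E}$ with $e_i^2=e_{i+1}$ for $i<k$ and $e_k^2=0$; we may normalise to this form over $\mathbb{C}$ by rescaling, or keep general nonzero $a_{i(i+1)}$. Then every subalgebra is of the form $\spa\{e_j,\dots,e_k\}\oplus W$ with $W\subseteq\mathbb{C}^{n-k}$, possibly mixed with annihilator directions. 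One checks this by taking the minimal index $j$ in the supports of the $\widetilde{\E}$-components and squaring repeatedly. Each such subspace is spanned by basis vectors after a change of basis inside the annihilator, so it has the extension property.

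The main work is (i)$\Rightarrow$(ii). Fix a natural basis with structure matrix~\eqref{eq:matrix}, where $e_1,\dots,e_k$ have nonzero squares and $\ann(\E)=\spa\{e_{k+1},\dots,e_n\}$. By \cite[Proposition~5.4]{LPP_25} we have $\rank M_B(\E)=k$, so $e_1^2,\dots,e_k^2$ are linearly independent. We proceed by induction on $\dim\E$, passing to the modular quotient $\E/\ann(\E)$, which is nilpotent with natural basis $\{\overline{e_1},\dots,\overline{e_k}\}$.

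First we show that at most one index $i\le k$ has $e_i^2\in\ann(\E)$. Otherwise, for two such indices $i,j$, the element $u=e_i+\lambda e_j$ with suitable $\lambda$ has support of size $2$ disjoint from $\supp(\ann(\E))$ and $u^2\in\ann(\E)$, contradicting Proposition~\ref{prop:nec_mod}. Next, Proposition~\ref{prop:nec_mod} also excludes any $u$ with $|\supp(u)|\ge2$ in $\spa\{e_1,\dots,e_k\}$ and $u^2\in\ann(\E)$. Since the squares are independent, this is equivalent to saying that $\ann(\E)\cap\spa\{e_1^2,\dots,e_k^2\}$ is at most one-dimensional and spanned by the square of a single basis vector. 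By linear independence it is then exactly $\spa\{e_k^2\}$.

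By induction, $\E/\ann(\E)$ is isomorphic to an algebra of the form $\widetilde{\E}'\oplus\mathbb{C}^{r}$. Its annihilator is $\spa\{\overline{e_i}: e_i^2\in\ann(\E)\}=\spa\{\overline{e_k}\}$, which is one-dimensional, so $r=0$ and $\E/\ann(\E)$ has maximal nilpotency index. The remaining step is to lift this structure. We reorder and change the basis within $\spa\{e_1,\dots,e_k\}$ compatibly, so that $e_i^2\equiv e_{i+1}\pmod{\ann(\E)}$ for $i<k$ and $e_k^2=f\in\ann(\E)$. We then adjust the basis by absorbing the annihilator components of $e_i^2$, replacing $e_{i+1}$ by $e_i^2$. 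This does not spoil naturality, because the corrections lie in the annihilator and only the square of $e_{i+1}$ matters. Finally we set $e_{k+1}=f$ and complete with annihilator vectors, which yields $\widetilde{\E}\oplus\mathbb{C}^{n-k-1}$ with $\widetilde{\E}$ of dimension $k+1$ and maximal nilpotency index.

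I expect the main obstacle to be this lifting step. The delicate point is that the natural basis of $\E/\ann(\E)$ given by induction may not come from a natural basis of $\E$, since naturality is only guaranteed modulo $\ann(\E)$. We will handle this by noting that the natural basis of an algebra of maximal nilpotency index is unique up to scaling; this follows from its complete lattice structure, since the basis vectors are determined by the chain of ideals and the squares. Consequently, the basis $\{\overline{e_1},\dots,\overline{e_k}\}$ is already a rescaled and permuted version of it, and rescaling in $\mathbb{C}$ lifts trivially.
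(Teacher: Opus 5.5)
Your outline follows the paper's: quotient by $\ann(\E)$, a one-dimensional annihilator of the quotient via Proposition~\ref{prop:nec_mod}, maximal nilpotency index of the quotient, then a lift. Using induction on $\dim\E$ in place of the paper's citation of \cite[Theorem~5.1 \& Corollary~5.7]{LPP_25} for the third step is a valid self-contained alternative; the case $\widetilde{\E}'=0$, $r=1$ gives the same conclusion.

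\textbf{The gap is the lifting step.} You flagged it yourself, but you resolve it with a false claim. The natural basis of a maximal-index algebra is not unique up to scaling: in $\{f_1^2=f_2\}$, the basis $\{f_1+df_2,f_2\}$ is natural for every $d$. More seriously, your target $e_i^2\equiv e_{i+1}\pmod{\ann(\E)}$ cannot be reached in general. Let $\E$ have $e_1^2=e_2+e_3$, $e_2^2=e_3$, $e_3^2=e_4$, $e_4^2=e_5$, $e_5^2=0$. It satisfies (ii), so it is modular.
\begin{itemize}
\item In $\E/\ann(\E)$ the squares $\overline{e_1}^2,\overline{e_2}^2,\overline{e_3}^2$ are linearly independent. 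So for $x=\sum x_i\overline{e_i}$ and $y=\sum y_i\overline{e_i}$, one has $xy=0$ if and only if $x_iy_i=0$ for $i\le 3$.
\item Hence, up to order, the natural bases of the quotient are exactly $\{c_1\overline{e_1}+d_1\overline{e_4},\,c_2\overline{e_2}+d_2\overline{e_4},\,c_3\overline{e_3}+d_3\overline{e_4},\,c_4\overline{e_4}\}$.
\item The square of the first of these vectors always has nonzero components $c_1^2/c_2$ and $c_1^2/c_3$ along the second and third. So no natural basis of $\E$ is bidiagonal modulo $\ann(\E)$.
\end{itemize}
Likewise, ``replacing $e_{i+1}$ by $e_i^2$'' when $e_i^2$ has components along $e_{i+2},\dots,e_k$ destroys naturality, because those vectors have nonzero squares. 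Here $(e_2+e_3)e_3=e_4\neq0$. The same misconception is behind your remark in (ii)$\Rightarrow$(iii) that one can rescale to $e_i^2=e_{i+1}$; only the superdiagonal can be normalised.

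\textbf{How to fix it.} You need only a nonzero superdiagonal, not a normal form. The correct version of your uniqueness claim is this: natural bases of a maximal-index algebra are unique up to permutation, scaling, and adding multiples of the annihilator generator. That fact, or the standard characterisation of maximal nilpotency index, shows that the inherited strictly upper triangular basis $\{\overline{e_1},\dots,\overline{e_k}\}$ has $a_{12},\dots,a_{(k-1)k}\neq0$. Then proceed as in the paper:
\begin{itemize}
\item Modify only by annihilator vectors, $e_i'=e_i+z_i$ with $z_i\in\ann(\E)$. This keeps the basis natural and leaves every square unchanged.
\item Choose the $z_i$ by back substitution so that $e_i^2\in\spa\{e_1',\dots,e_k',e_k^2\}$ for $i<k$. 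This is a triangular system with pivots $a_{i(i+1)}$.
\item Then $\spa\{e_1',\dots,e_k',e_k^2\}$ is a $(k+1)$-dimensional subalgebra with a natural basis and nonzero superdiagonal. Any complement of $\spa\{e_k^2\}$ in $\ann(\E)$ is the zero summand.
\end{itemize}
Your dimension count ($k+1$ and $n-k-1$) is the correct one. The paper's closing sentence takes $\{e_1',\dots,e_k'\}$ as a basis of $\widetilde{\E}$, which leaves out $e_k^2=e_k'^2$.
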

\begin{proof}
 	The equivalence (ii)$\Leftrightarrow$(iii) was established in \cite[Theorem~4.2]{CKO_19}, and the implication (iii)$\Rightarrow$(i) was proved in Proposition~\ref{prop:comp_imp_mod}. It therefore suffices to prove the implication (i)$\Rightarrow$(ii).
	
	Let $\E$ be a modular nilpotent evolution algebra whose multiplication, without loss of generality, satisfies~\eqref{eq:matrix}.
	Since modularity is preserved under quotients, the quotient algebra $\E/\ann(\E)$ is also modular. Moreover, as a straightforward consequence of Proposition~\ref{prop:nec_mod}, this quotient algebra has a one-dimensional annihilator. Indeed, if there existed two elements $\overline{e_i},\overline{e_j}\in\ann(\E/\ann(\E))$, then  $e_i^2,e_j^2\in\ann(\E)$, and hence the element $u=e_i+e_j$ would satisfy the conditions of Proposition~\ref{prop:nec_mod}. 
	
	In addition, by
	\cite[Theorem~5.1 \& Corollary~5.7]{LPP_25}, it follows that $\E/\ann(\E)$ has maximum index of nilpotency.  Therefore, the structure constants $a_{12},a_{23},\dots,a_{(k-1)k}$ are all nonzero. 
	Consequently, following the argument used in \cite[p.~294]{CKO_19}, we consider a change of basis of the form \[e_i'=e_i+\sum_{j=k+1}^{n}\beta_{ij}e_j,\quad1\leq i\leq k,\qquad\quad e_i'=e_i,\quad k+1\leq i\leq n,\] where the coefficients $\beta_{ij}$ are determined by the system
	\begin{align*}
		\left(
		\begin{matrix}
			0 & a_{12} & a_{13} & \dots & a_{1k} \\
			0 & 0 & a_{23} & \dots & a_{2k} \\
			\vdots & \vdots & \vdots & \ddots & \vdots \\
			0 & 0 & 0 & \dots & a_{(k-1)k}
		\end{matrix}
		\right)&
		\left(
		\begin{matrix}
			\beta_{1(k+1)} & \beta_{1(k+2)} & \dots & \beta_{1n} \\
			\beta_{2(k+1)} & \beta_{2(k+2)} & \dots & \beta_{2n} \\
			\vdots & \vdots & \ddots & \vdots \\
			\beta_{k(k+1)} & \beta_{k(k+2)} & \dots & \beta_{kn}
		\end{matrix}
		\right)
		\\&=
		\left(
		\begin{matrix}
			a_{1(k+1)} & a_{1(k+2)} & \dots & a_{1n} \\
			a_{2(k+1)} & a_{2(k+2)} & \dots & a_{2n} \\
			\vdots & \vdots & \ddots & \vdots \\
			a_{(k-1)(k+1)} & a_{(k-1)(k+2)} & \dots & a_{(k-1)n}
		\end{matrix}
		\right).
	\end{align*}
	With this change of basis, we obtain that $\E$ is isomorphic to the direct sum $\widetilde{\E}\oplus\mathbb{C}^{n-k}$, where $\{e_1',\dots,e_k'\}$ is a natural basis for $\widetilde{\E}$.
\end{proof}
\begin{remark}
Although stated over $\mathbb{C}$, the classification of complete nilpotent evolution algebras given in \cite[Theorem~4.2]{CKO_19} remains valid over quadratically closed fields $\mathbb{K}$ of characteristic not two. In this more general setting, such algebras are still isomorphic to $\widetilde{\E}\oplus\mathbb{K}^{n-k}$. Consequently, Theorem~\ref{th:equiv_mod} extends to quadratically closed fields of characteristic different from $2$.
\end{remark}
\section{Modularity in a specific case of regular evolution algebras}\label{sec:4}
Recall that an (evolution) algebra $\E$ is said to be \textit{supersolvable} if there exists a complete flag made up of ideals, that is, there exist a chain 
\[0=I_0\subsetneq I_1\subsetneq\dots\subsetneq I_n=\E\] of ideals such that $\dim{I_i}=i$ for all $0\leq i\leq n$. As stated in \cite[Remark~4.2]{LPP_25}, and in contrast to the case of groups or Lie algebras, there exist supersolvable evolution algebras that are not solvable. For instance, consider the regular evolution algebra with natural basis $\{e_1,\dots,e_n\}$ and product $e_i^2=e_i$ for all $i=1,\dots,n$. Indeed, 
\[0\subsetneq\spa\{e_1\}\subsetneq\spa\{e_1,e_2\}\subsetneq\dots\subsetneq\spa\{e_1,\dots,e_{n-1}\}\subsetneq\E\] is a complete flag made up of ideals.  We now state the following elementary result, which characterises supersolvability in the regular setting.
\begin{proposition}\label{prop:super_triangular}
Let $\mathcal{E}$ be a regular evolution algebra over any field $\mathbb{K}$. Then, $\mathcal{E}$ is supersolvable if and only if there exists a natural basis such that the structure matrix is lower triangular with ones on the diagonal.
\end{proposition}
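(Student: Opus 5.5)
The converse direction is a direct construction, so I start there. Suppose $B=\{e_1,\dots,e_n\}$ is a natural basis whose structure matrix is lower triangular with ones on the diagonal. Then $e_i^2=e_i+\sum_{j<i}a_{ij}e_j$. Set $I_i=\spa\{e_1,\dots,e_i\}$. The only nonzero products of basis vectors are the squares, and for $j\le i$ we have $e_j^2\in I_j\subseteq I_i$, so $I_i\E\subseteq I_i$. Hence each $I_i$ is an ideal with $\dim I_i=i$, and we obtain a complete flag of ideals. Regularity is automatic here, since the determinant equals $1$.

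For the forward direction, let $0=I_0\subsetneq I_1\subsetneq\cdots\subsetneq I_n=\E$ be a complete flag of ideals in a regular evolution algebra. The plan is to show by induction that each $I_i$ admits a natural basis $\{e_1,\dots,e_i\}$ with $e_j^2=e_j+\sum_{l<j}a_{jl}e_l$, and that this basis can be extended.

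The key input is that $\E$ is regular, so every subalgebra, and in particular every ideal, has a natural basis by \cite[Theorem~2]{LP_25_regular}. More useful is the quotient structure. Each $\E/I_{i}$ is again regular, since $(\E/I)^2=(\E^2+I)/I=\E/I$, and it is supersolvable via the flag $I_j/I_i$. So I would proceed by induction on $n$ and work top-down. The ideal $I_{n-1}$ has codimension $1$, and I choose $e_n$ with $\E=I_{n-1}\oplus\mathbb{K}e_n$. By regularity, $\E^2=\E$ is spanned by $I_{n-1}^2+I_{n-1}\E+e_n^2\subseteq I_{n-1}+\mathbb{K}e_n^2$. Hence $e_n^2\notin I_{n-1}$, and after rescaling $e_n$ we may assume $e_n^2\equiv e_n \pmod{I_{n-1}}$. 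This requires solving $\lambda^2 c=\lambda$, i.e.\ $\lambda=c^{-1}$, so no square roots are needed and it works over any field.

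The difficulty is to make $e_n$ orthogonal to $I_{n-1}$ and to ensure that $I_{n-1}$ itself is a regular supersolvable evolution algebra, so that induction applies. I would argue as follows. Take a natural basis $B$ of $\E$. Since $I_{n-1}$ is a codimension-one ideal and $\E/I_{n-1}$ is a one-dimensional regular evolution algebra, the natural basis $B$ maps onto a spanning set of $\E/I_{n-1}$.

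I expect this to be the main obstacle: showing that exactly one basis vector, say $e_n$, lies outside $I_{n-1}$, and that the others span $I_{n-1}$. Suppose instead that two vectors $e_p,e_q$ had nonzero images $\bar e_p=\alpha\bar x$ and $\bar e_q=\beta\bar x$. Then $0=\overline{e_pe_q}=\alpha\beta\,\bar x^2$ with $\bar x^2\neq0$ by regularity of the quotient, which is a contradiction. Hence $I_{n-1}=\spa(B\setminus\{e_n\})$ is a basic ideal, so it is an evolution algebra with natural basis $B\setminus\{e_n\}$, and its structure matrix is the corresponding principal submatrix.

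That submatrix is nonsingular because, with $I_{n-1}$ an ideal, the matrix is block triangular with $a_{nn}\neq0$. Thus $I_{n-1}$ is regular and supersolvable via $I_0\subsetneq\cdots\subsetneq I_{n-1}$. By induction, $I_{n-1}$ has a natural basis with lower unitriangular matrix, and together with $e_n$ this is a natural basis of $\E$, since $e_n e_j=0$ for $j<n$ and the new basis of $I_{n-1}$ lies in $\spa(B\setminus\{e_n\})$. Finally, rescaling $e_n$ by $a_{nn}^{-1}$ gives the desired matrix.
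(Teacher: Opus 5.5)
Your proof is correct. The sufficiency direction is the same as the paper's; for necessity you take a genuinely different route.

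The paper cites \cite[Proposition~4.2]{BCS_22}: every ideal of a regular evolution algebra is basic. Hence the flag ideals are spanned by nested subsets of one natural basis. Reordering that basis makes the structure matrix lower triangular, and the rescaling $f_i=a_{ii}^{-1}e_i$ normalises the diagonal; this is possible because a nonsingular triangular matrix has nonzero diagonal.

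You instead prove, inside an induction on $n$, exactly the special case that is needed: a codimension-one ideal $I$ of a regular evolution algebra is basic. Two basis vectors with nonzero images $\alpha\bar x,\beta\bar x$ in the one-dimensional regular quotient $\E/I$ would give $0=\overline{e_pe_q}=\alpha\beta\,\bar x^2\neq0$. Regularity then descends to $I$, because the vanishing last column gives $\det M_B(\E)=a_{nn}\det M'$.

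Your approach buys a self-contained argument whose independence from the ground field is evident. The cost is the induction, including the easy check that $I_0,\dots,I_{n-2}$ remain ideals of $I_{n-1}$. The paper's proof is a two-line reduction, but it rests on the external basicness theorem.

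One tidying remark: your opening construction of an arbitrary complement $e_n$ and the appeal to \cite[Theorem~2]{LP_25_regular} are never used. Merely admitting some natural basis is weaker than being spanned by vectors of $B$. It is your direct codimension-one argument that carries the proof.
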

\begin{proof}
The sufficiency is immediate: such a structure matrix directly yields a complete flag of ideals: $0\subsetneq\spa\{e_1\}\subsetneq\dots\subsetneq\spa\{e_1,\dots,e_{n-1}\}\subsetneq\E$.

For the necessity, note that in the regular setting, all ideals are basic (see \cite[Proposition~4.2]{BCS_22}), which means that all of them admit a basis consisting of vectors of the natural basis of $\E$. This implies that a triangular matrix can be obtained by simply reordering the elements of the natural basis; let $(a_{ij})$ denote such a structure matrix. Finally, to normalise the diagonal entries to ones, it suffices to consider the natural basis $\{f_i=\frac{1}{a_{ii}}e_i\}_{i=1}^n$.
\end{proof}

In particular, our purpose is to characterise modularity within three-dimensional supersolvable regular evolution algebras over $\mathbb{C}$ (and, in fact, to go beyond this case). Note that, by the previous proposition, whenever working with this type of evolution algebras, we may assume that their structure matrices are
\begin{equation}\label{eq:matrix_sup}
	\begin{pmatrix}
		1 & 0 & 0\\
		\lambda & 1 & 0\\
		\mu & \rho & 1
	\end{pmatrix},\text{ with }\lambda, \mu,\rho\in\mathbb{C}.
\end{equation}
Consequently, an evolution algebra over $\mathbb{C}$ which admits a natural basis such that the structure matrix is \eqref{eq:matrix_sup} will be denoted by $\mathcal{E}_\mathbb{C}^{\mathrm{reg}}(\lambda,\mu,\rho)$. Moreover, by Lemma~\cite[Lemma~1]{LP_25_regular}, their one-dimensional subalgebras are given by the nontrivial solutions of the following nonlinear system of equations:
\begin{align}\label{eq:sist_supersoluble}
	\left\{
		\begin{array}{ccccccc}
			x^2&=&x&-&\lambda y&+&(\lambda\rho-\mu)z,\\
			y^2&=&&&y&-&\rho z,\\
			z^2&=&&&&&z.\\
		\end{array}
		\right.
\end{align}
\begin{remark}\label{rem:qi_super}
	Observe that every supersolvable algebra trivially admits a complete flag of subalgebras. Hence, by Remark~\ref{rem:flag}, the equivalence stated in Proposition~\ref{prop:equiv_mod_qi} also applies to supersolvable algebras.
\end{remark}
\begin{proposition}\label{prop:cond_mod}
	If an evolution algebra 
	$\E=\mathcal{E}_\mathbb{C}^{\mathrm{reg}}(\lambda,\mu,\rho)$ is modular, then the following assertions hold:
\begin{enumerate}[\rm (i)]
\item $\rho\neq0$;
\item $\rho^2\lambda-\rho\mu+\mu^2=0$; and 
\item $\lambda=0$.
\end{enumerate}
\end{proposition}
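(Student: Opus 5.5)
By Remark~\ref{rem:qi_super} and Proposition~\ref{prop:equiv_mod_qi}, modularity of $\E=\mathcal{E}_\mathbb{C}^{\mathrm{reg}}(\lambda,\mu,\rho)$ is equivalent to every subalgebra being a quasi-ideal. So the plan is to prove each assertion by contradiction: assuming it fails, I exhibit two subalgebras $U,V$ with $\langle U,V\rangle\neq U+V$. The multiplication is $e_1^2=e_1$, $e_2^2=\lambda e_1+e_2$, $e_3^2=\mu e_1+\rho e_2+e_3$. The most convenient test subalgebras are the one-dimensional ones. Up to the normalisation in \cite[Lemma~1]{LP_25_regular}, these are spanned by the vectors $xe_1+ye_2+ze_3$ with $(x,y,z)$ a nontrivial solution of \eqref{eq:sist_supersoluble}. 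The basic computation is then: for two such vectors $u,u'$ spanning distinct lines, $uu'=xx'e_1+yy'e_2^2+zz'e_3^2$. The plane $\spa\{u,u'\}$ is a subalgebra exactly when this product lies in it. If it is not, the generated subalgebra is all of $\E$, so neither line is a quasi-ideal.

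For (i), I assume $\rho=0$. Then \eqref{eq:sist_supersoluble} decouples: $z\in\{0,1\}$, $y\in\{0,1\}$ and $x^2-x=-\lambda y-\mu z$. I take $u=ae_1+e_3$ with $a^2-a+\mu=0$ and $v=be_1+e_2$ with $b^2-b+\lambda=0$. I choose $a$ and $b$ nonzero; this is always possible, since if $\mu=0$ one takes $a=1$, and otherwise both roots are nonzero, and similarly for $b$. Then $uv=ab\,e_1\notin\spa\{u,v\}$. Hence $\langle u,v\rangle\supseteq\spa\{u,v,e_1\}=\E$, and $\spa\{u\}$ is not a quasi-ideal.

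For (ii) and (iii), I now use $\rho\neq0$. Taking $z=1$, the second equation $y^2-y+\rho=0$ has two roots $y_1,y_2$ with $y_1y_2=\rho$, both nonzero. For each $y_j$, the first equation gives two roots $x$, so there are up to four one-dimensional subalgebras with $z=1$. I also use the lines with $z=0$, for instance $\spa\{e_1\}$ and $\spa\{be_1+e_2\}$, and the two-dimensional subalgebras $\spa\{e_1,\alpha e_2+\beta e_3\}$, which exist precisely when $\alpha^2\beta+\rho\beta^3=\alpha\beta^2$. For each pair $(u,u')$, the quasi-ideal requirement forces $\det(u,u',uu')=0$. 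Writing these determinant conditions for the pairs with different $y$-values and for the pairs with equal $y$ but different $x$, I would eliminate $x,x'$ using Vieta's relations ($x+x'=1$, and $xx'$ equal to the constant term determined by $y$). I expect this elimination to produce polynomial identities in $\lambda,\mu,\rho$. The first should reduce to $\rho^2\lambda-\rho\mu+\mu^2=0$. Combining it with a second pair, for example one line with $z=1$ and $v=be_1+e_2$, should force $\lambda=0$.

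The main obstacle is this elimination step. One must pick the right pairs so that the resulting determinants factor cleanly, and handle the degenerate situations separately: coincident roots (discriminant $1-4\rho=0$, or a double root in $x$) and the cases where some coordinate vanishes. If a direct choice of pairs does not factor well, I would first pass to the quotient $\E/\spa\{e_1\}$, which is two-dimensional and modular with matrix $\left(\begin{smallmatrix}1&0\\ \rho&1\end{smallmatrix}\right)$, to pin down admissible $y$-values. I would then lift to $\E$, testing against $\spa\{e_1\}$ and against the two-dimensional subalgebras containing $e_1$.
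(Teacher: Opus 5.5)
\textbf{There is a genuine gap: (ii) and (iii) are not proved.} Your part (i) is correct and is essentially the paper's argument. You check directly that $uv=ab\,e_1\notin\spa\{u,v\}$, where the paper instead cites the natural-basis theorem. For (ii) and (iii), however, you only announce an elimination whose outcome you ``expect'', and those expectations are off:
\begin{itemize}
\item Two $z=1$ lines with the same $y$ impose no condition at all. Their span is $\spa\{e_1,ye_2+e_3\}$, which is always a subalgebra because $y^2+\rho=y$.
\item Lines with different $y$ do not exist when $\rho=\frac14$. When $\rho\neq\frac14$, the pair conditions force both $x$-quadratics to have double roots, hence $\lambda=0$ and $\mu=\frac14$. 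That contradicts (ii) rather than producing it.
\item The fallback through $\E/\spa\{e_1\}$ gives nothing, since every two-dimensional algebra is modular.
\end{itemize}
You do name the right pair in passing (a $z=1$ line against $\spa\{be_1+e_2\}$), but the computation it requires is exactly what is missing.

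\textbf{The missing computation, which is the paper's route.} Take $u=\alpha e_1+\beta e_2+e_3$ idempotent and $v=\gamma e_1+e_2$, with $\gamma\neq0$ a root of $x^2-x+\lambda$. The product $uv=(\alpha\gamma+\lambda\beta)e_1+\beta e_2$ has no $e_3$-component, so $uv\in\spa\{u,v\}$ forces $uv=\beta v$. Using $\lambda=\gamma-\gamma^2$, this says $\alpha=\beta\gamma$.
\begin{itemize}
\item Then $e_3=u-\beta v$, so $\spa\{u,v\}=\spa\{v,e_3\}$. Requiring $e_3^2$ to lie in it gives $\mu=\rho\gamma$.
\item Substituting $\gamma=\mu/\rho$ into $\gamma^2-\gamma+\lambda=0$ and multiplying by $\rho^2$ gives (ii).
\item Since $\gamma=\mu/\rho$ is forced, $x^2-x+\lambda$ has at most one nonzero root, so $\lambda\in\{0,\frac14\}$.
\item If $\lambda=\frac14$, then $\gamma=\frac12$ and $\mu=\frac{\rho}{2}$. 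The $z=1$ solutions are then $x\in\{\frac y2,\,1-\frac y2\}$. The line $(1-\frac y2)e_1+ye_2+e_3$ violates $\alpha=\beta\gamma$ unless $y=1$, that is, unless $\rho=0$.
\end{itemize}
Imposing $\alpha=\beta\gamma$ on all four $z=1$ solutions at once is even quicker. It forces double roots in both quadratics, and so directly gives $\lambda=0$ and $\mu=\rho=\frac14$.

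\textbf{A slip in the paper itself.} When excluding $\lambda=\frac14$, the paper substitutes $\beta=2\alpha$ into $y^2=y-\rho$ and writes $4\alpha^2=\alpha-\rho$. The correct equation is $4\alpha^2=2\alpha-\rho$, which coincides with the first equation. So the exclusion has to use that both $x$-roots for a fixed $y$ must satisfy $\beta=2\alpha$, as in the last bullet above.
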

\begin{proof}
Since we are working over an algebraically closed field, the system~\eqref{eq:sist_supersoluble} always admits a solution with all components nonzero. Consequently, a subalgebra of the form $\spa\{\alpha e_1+\beta e_2+e_3\}$, with $\alpha$ and $\beta$ nonzero, always exists. Denote by $U$ one of them. Similarly, it is easy to check that $\spa\{\gamma e_1+e_2\}$ is a subalgebra if and only if $\gamma$ is a solution of the quadratic equation $x^2-x+\lambda=0$. Since this equation has at least one nonzero solution, we denote by $V$ a subalgebra of the form $\spa\{\gamma e_1+e_2\}$ with $\gamma\neq0$.

(i) For the sake of contradiction, assume that $\rho=0$. In this case, observe that $\spa\{\delta e_1+e_3\}$ is a subalgebra if and only if $\delta$ is a solution of $x^2-x+\mu=0$. As before, this equation has at least one nonzero solution, so denote by $W$ a subalgebra of the form $\spa\{\delta e_1+e_3\}$ with $\delta\neq0$.  By \cite[Theorem~2]{LP_25_regular}, we then obtain that $V+W$ is not a subalgebra, since it does not admit a natural basis. Therefore, they are not quasi-ideals, and the claim follows from Remark~\ref{rem:qi_super}.

(ii) Since $\E$ is modular then, Remark~\ref{rem:qi_super} implies that $U+V$ is a subalgebra. Then, by \cite[Theorem~2]{LP_25_regular},  $U+V$  admits a natural basis, which occurs precisely when  $\alpha e_1+\beta e_2=k(\gamma e_1+e_2)$ for some $k\in\mathbb{C}^*$.
This leads to a two-dimensional
subalgebra $\spa\{v,e_3\}$, with $v\in\spa\{e_1,e_2\}$. According to \cite[Corollary~7]{LP_25_regular}, such a subalgebra exists if the identity 
\[\mu^2\rho+\rho^3\lambda=\rho^2\mu\implies\rho(\rho^2\lambda-\rho\mu+\mu^2)=0.\] 
is satisfied. Since $\rho\neq0$ by item (i), the claim is established.

(iii) Since $\rho\neq0$ by item (i), the only possible two-dimensional subalgebra of the form $\spa\{v,e_3\}$, with $v\in\spa\{e_1,e_2\}$, is $\spa\{\mu e_1+\rho e_2,e_3\}$. Moreover, consider the subalgebras $V_1=\spa\{\gamma_1e_1+e_2\}$ and $V_2=\spa\{\gamma_2e_1+e_2\}$ with $\gamma_1,\gamma_2$ solutions of $x^2-x+\lambda=0$. We now claim that if $\gamma_1$ and $\gamma_2$ are both nonzero, then $\gamma_1=\gamma_2$, that is, $x^2-x+\lambda=0$ has only one nonzero solution. Indeed, in this case, as all subalgebras are quasi-ideals, we have
\[\langle U, V_1\rangle=\langle U, V_2\rangle=\spa\{\mu e_1+\rho e_2,e_3\},\] 
which implies that $\mu e_1+\rho e_2=k_1(\gamma_1e_1+e_2)=k_2(\gamma_2e_1+e_2)$ with $k_1,k_2\in\mathbb{C}$, what yields that $\gamma_1=\gamma_2$. Consequently, we have that either $\lambda=0$ or $\lambda=\frac{1}{4}$.

We now show that the case $\lambda=\frac{1}{4}$ is not valid. For the sake of contradiction, if $\lambda=\frac{1}{4}$, then by item (ii), we have that $\mu=\frac{1}{2}\rho$. It is easy to check that, in this case, $\spa\{\frac{1}{2}e_1+e_2\}$ is a subalgebra. Moreover, looking at \eqref{eq:sist_supersoluble}, all one-dimensional evolution algebras of the form $\spa\{\alpha e_1+\beta e_2+e_3\}$, with $\alpha\neq0$ or $\beta\neq0$, are given by the nontrivial solutions of  
\begin{align}\label{eq:sist}
	\left\{
		\begin{array}{ccccccc}
			x^2&=&x&-&\frac{1}{4} y&-&\frac{1}{4}\rho,\\
			y^2&=&&&y&-&\rho.\\
		\end{array}
		\right.
\end{align}
In fact, all these subalgebras must satisfy
\[\left\langle\frac{1}{2}e_1+e_2,\alpha e_1+\beta e_2+e_3\right\rangle=\spa\left\{\frac{1}{2}e_1+e_2,\alpha e_1+\beta e_2+e_3\right\}.\]
Consequently, it must hold that $\beta=2\alpha$ for all $(\alpha,\beta)$ nontrivial solutions of \eqref{eq:sist}. Then, we get that $\alpha^2=\frac{1}{2}\alpha-\frac{1}{4}\rho$ and $4\alpha^2=\alpha-\rho$, where we easily obtain that $\alpha=\rho=0$, a contradiction with item (i). 
\end{proof}

\begin{theorem}\label{th:char_mod}
Let $\mathcal{E}$ be a three-dimensional supersolvable regular evolution algebra over $\mathbb{C}$. Then, $\mathcal{E}$ is modular if and only if it is isomorphic to $\mathcal{E}_\mathbb{C}^{\mathrm{reg}}\big(0,\frac{1}{4},\frac{1}{4}\big)$.
\end{theorem}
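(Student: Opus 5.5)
We prove necessity by sharpening Proposition~\ref{prop:cond_mod} with one further quasi-ideal argument, and sufficiency by listing all subalgebras of $\mathcal{E}_\mathbb{C}^{\mathrm{reg}}\big(0,\frac14,\frac14\big)$ and invoking Remark~\ref{rem:qi_super}.

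\emph{Necessity.} By Proposition~\ref{prop:super_triangular} we may assume $\E=\mathcal{E}_\mathbb{C}^{\mathrm{reg}}(\lambda,\mu,\rho)$. Proposition~\ref{prop:cond_mod} then gives $\lambda=0$, $\rho\neq0$ and $\mu(\mu-\rho)=0$. With $\lambda=0$, taking $z=1$ in \eqref{eq:sist_supersoluble}, the system decouples into $x^2-x+\mu=0$ and $y^2-y+\rho=0$. Hence $\spa\{\alpha e_1+\beta e_2+e_3\}$ is a subalgebra for \emph{every} pair $(\alpha,\beta)$ with $\alpha$ a root of the first quadratic and $\beta$ a root of the second, chosen independently.

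Now take $V=\spa\{e_1+e_2\}$, which is a subalgebra since $\gamma=1$ solves $x^2-x=0$. By Remark~\ref{rem:qi_super}, $U+V$ must be a subalgebra for every $U=\spa\{\alpha e_1+\beta e_2+e_3\}$. By \cite[Theorem~2]{LP_25_regular}, $U+V$ then admits a natural basis. Arguing exactly as in item~(ii) of Proposition~\ref{prop:cond_mod}, this forces $\alpha e_1+\beta e_2$ to be proportional to $e_1+e_2$ whenever $(\alpha,\beta)\neq(0,0)$. Equivalently, $U+V=\spa\{e_1+e_2,e_3\}$ must be a subalgebra, which needs $\alpha=\beta$.

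Since $\alpha$ and $\beta$ vary independently, two distinct roots of either quadratic would produce a pair with $\alpha\neq\beta$. Both quadratics must therefore have a double root, so $\mu=\rho=\frac14$, with $\alpha=\beta=\frac12\neq0$. This case is consistent with $\mu(\mu-\rho)=0$ and gives the stated algebra.

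\emph{Sufficiency.} Let $\E=\mathcal{E}_\mathbb{C}^{\mathrm{reg}}\big(0,\frac14,\frac14\big)$, so $e_1^2=e_1$, $e_2^2=e_2$ and $e_3^2=\frac14e_1+\frac14e_2+e_3$.

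\begin{enumerate}[\rm (1)]
\item \emph{One-dimensional subalgebras.} Solving \eqref{eq:sist_supersoluble}: for $z=0$ we need $x,y\in\{0,1\}$, giving $\spa\{e_1\}$, $\spa\{e_2\}$, $\spa\{e_1+e_2\}$; for $z=1$ we get only $\spa\{w\}$ with $w=\frac12e_1+\frac12e_2+e_3$.
\item \emph{Two-dimensional subalgebras.} By \cite[Theorem~2]{LP_25_regular} each has a natural basis, and by \cite[Corollary~7]{LP_25_regular} it is of the form $\spa\{e_i,v\}$ with $v\in\spa\{e_p,e_q\}$, or else spanned by two orthogonal idempotent-type vectors built from the list in step~(1). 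Running through the cases $i=1,2,3$, I expect exactly
\[\spa\{e_1,e_2\},\qquad \spa\{e_1+e_2,e_3\},\qquad \spa\{e_1,\tfrac12e_2+e_3\},\qquad \spa\{e_2,\tfrac12e_1+e_3\}.\]
\item \emph{Quasi-ideal check.} By Remark~\ref{rem:qi_super}, it suffices to verify that for every pair of subalgebras $U,V$ the sum $U+V$ is again a subalgebra. Sums of dimension $3$ equal $\E$. For the remaining pairs of one-dimensional subalgebras, one checks that $U+V$ is one of the four planes above. For example, $\spa\{e_1\}+\spa\{w\}=\spa\{e_1,\frac12e_2+e_3\}$ and $\spa\{e_1+e_2\}+\spa\{w\}=\spa\{e_1+e_2,e_3\}$. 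Pairs consisting of a line inside a plane are trivial.
\end{enumerate}

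The main obstacle is step~(2), the exhaustive enumeration of the two-dimensional subalgebras, since missing one could hide a failure of the quasi-ideal property. I would handle it by applying \cite[Corollary~7]{LP_25_regular} to each choice of $i$, together with the observation that any two-dimensional subalgebra with a natural basis $\{u,v\}$ has $u$ or $v$ spanning a one-dimensional subalgebra of regular type, which by step~(1) leaves only finitely many candidates.
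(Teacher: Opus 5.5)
Your proposal is correct and follows the paper's proof. For necessity, you take $\lambda=0$ from Proposition~\ref{prop:cond_mod} and require that $\spa\{e_1+e_2\}+\spa\{\alpha e_1+\beta e_2+e_3\}$ be a subalgebra for every independent choice of roots. Since $(e_1+e_2)(\alpha e_1+\beta e_2+e_3)=\alpha e_1+\beta e_2$, this holds exactly when $\alpha=\beta$, which forces $\mu=\rho=\frac14$. For sufficiency, you list the four one-dimensional subalgebras and check their pairwise sums, as the paper does.

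The ``main obstacle'' you identify in step~(2) is not needed. Every two-dimensional subalgebra has codimension one and is therefore automatically a quasi-ideal; this is exactly how the paper finishes, and your step~(3) only uses that each sum of two of the four lines is a subalgebra. This also saves you from a problem with your proposed justification: the condition in \cite[Corollary~7]{LP_25_regular} reduces to $0=0$ for $i=1,2$ in this algebra, so applying it would not determine those planes.
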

\begin{proof}
If $\E$ is modular, by Proposition~\ref{prop:cond_mod}, we have that $\lambda=0$, so $\spa\{e_1+e_2\}$ is a subalgebra. Moreover, any one-dimensional subalgebra $\spa\{\alpha e_1+\beta e_2+e_3\}$ is given by 
\begin{align}\label{eq:syst_lambda_0}
	\left\{
	\begin{array}{ccccccc}
		x^2&=&x&&&-&\mu,\\
		y^2&=&&&y&-&\rho.\\
	\end{array}
	\right.
\end{align}
Since $\mathcal{E}$ is modular and by \cite[Theorem~2]{LP_25_regular}, we necessarily have
$$\langle e_1+e_2,\alpha e_1+\beta e_2+e_3\rangle=\spa\{e_1+e_2,\alpha e_1+\beta e_2+e_3\}=\spa\{e_1+e_2,e_3\}$$ for any $\alpha$ and $\beta$ solutions of $x^2-x+\mu=0$ and $y^2-y+\rho=0$, respectively. Notice that this holds if and only if $\alpha=\beta$ in any case, i.e. if and only if $\rho=\mu=\frac{1}{4}$.

Conversely, by a straightforward computation, we show that the one-dimensional subalgebras of  $\mathcal{E}_\mathbb{C}^{\mathrm{reg}}\big(0,\frac{1}{4},\frac{1}{4}\big)$ are $\spa\{e_1\}$, $\spa\{e_2\}$, $\spa\{e_1+e_2\}$ and $\spa\{e_1+e_2+2e_3\}$, which are all quasi-ideals. Since every subalgebra of codimension one is a quasi-ideal, all its subalgebras are quasi-ideals, and consequently $\mathcal{E}_\mathbb{C}^{\mathrm{reg}}\big(0,\frac{1}{4},\frac{1}{4}\big)$ is modular.\\

\begin{figure}[H]
	\noindent\begin{minipage}{0.65\textwidth}
		\centering
			\begin{tabular}{|| l | l ||}
				\hline
				\textbf{Subalg. of dim. $1$} & \textbf{Subalg. of dim. $2$}  \\
				\hline\hline
				$\spa\{e_1\}$  & $\spa\{e_1,e_2\}$\\
				$\spa\{e_2\}$   & $\spa\{e_1,e_2+2e_3\}$  \\
				$\spa\{e_1+e_2\}$ & $\spa\{e_2,e_1+2e_3\}$  \\
				$\spa\{e_1+e_2+2e_3\}$ & $\spa\{e_3,e_1+e_2\}$ \\
				\hline
			\end{tabular}
	\end{minipage}
	\begin{minipage}{0.34\textwidth}
		\centering
			\begin{tikzpicture}[scale=0.8]
				\draw[thick] (0,-1.5) -- (-0.5,-0.5);
				\draw[thick] (0,-1.5) -- (-1.5,-0.5);
				\draw[thick] (0,-1.5) -- (1.5,-0.5);
				\draw[thick] (0,-1.5) -- (0.5,-0.5);
				
				\draw[thick] (0,1.5) -- (-0.5,0.5);
				\draw[thick] (0,1.5) -- (-1.5,0.5);
				\draw[thick] (0,1.5) -- (1.5,0.5);
				\draw[thick] (0,1.5) -- (0.5,0.5);						
				
				\draw[thick] (-1.5,-0.5) -- (-1.5,0.5);
				\draw[thick] (-0.5,-0.5) -- (-1.5,0.5);
				\draw[thick] (0.5,-0.5) -- (-1.5,0.5);
				
				\draw[thick] (-1.5,-0.5) -- (-0.5,0.5);
				\draw[thick] (1.5,-0.5) -- (-0.5,0.5);
				
				\draw[thick] (-0.5,-0.5) -- (0.5,0.5);
				\draw[thick] (1.5,-0.5) -- (0.5,0.5);
				
				\draw[thick] (1.5,-0.5) -- (1.5,0.5);
				\draw[thick] (0.5,-0.5) -- (1.5,0.5);

				\draw[fill=white,thick=black](0.5,-0.5) circle [radius=0.1125];
				\draw[fill=white,thick=black](1.5,-0.5) circle [radius=0.1125];
				\draw[fill=white,thick=black](-0.5,-0.5) circle [radius=0.1125];
				\draw[fill=white,thick=black](-1.5,-0.5) circle [radius=0.1125];

				\draw[fill=white,thick=black](0.5,0.5) circle [radius=0.1125];
				\draw[fill=white,thick=black](1.5,0.5) circle [radius=0.1125];
				\draw[fill=white,thick=black](-0.5,0.5) circle [radius=0.1125];
				\draw[fill=white,thick=black](-1.5,0.5) circle [radius=0.1125];

				\draw[fill=white,thick=black] (0,-1.5) circle [radius=0.1125];
				
				\draw[fill=white,thick=black] (0,1.5) circle [radius=0.1125];
			\end{tikzpicture}
	\end{minipage}
\caption{Proper subalgebras and subalgebra lattice of $\mathcal{E}_\mathbb{C}^{\mathrm{reg}}\big(0,\frac{1}{4},\frac{1}{4}\big)$.}
\label{example:reg_sup}
\end{figure}
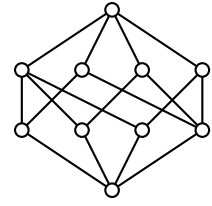
\end{proof}
Moreover, it has been possible to characterise modularity in supersolvable regular evolution algebras over $\mathbb{C}$ in arbitrary dimension, as shown in the next theorem.
\begin{theorem}\label{th:char_sup_reg}
Let $\mathcal{E}$ be a supersolvable regular evolution algebra over $\mathbb{C}$. Then, $\mathcal{E}$ is modular if and only if $n\leq2$ or $\mathcal{E}\cong\mathcal{E}_\mathbb{C}^{\mathrm{reg}}\big(0,\frac{1}{4},\frac{1}{4}\big)$.
\end{theorem}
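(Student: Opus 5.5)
Split the argument into three regimes: dimension at most $2$, dimension $3$, and dimension at least $4$.

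\emph{Dimension at most $2$.} By Proposition~\ref{prop:super_triangular}, $\E$ has a lower triangular structure matrix with ones on the diagonal. The subalgebra lattice of a $2$-dimensional algebra has length $2$, so every proper nonzero subalgebra is one-dimensional. Such a lattice contains no copy of $N_5$, since $N_5$ has length $3$. Hence it is trivially modular; the case $n=1$ is immediate.

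\emph{Dimension $3$.} This is exactly Theorem~\ref{th:char_mod}. It also gives the converse direction for $\mathcal{E}_\mathbb{C}^{\mathrm{reg}}(0,\frac14,\frac14)$.

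\emph{Dimension $n\ge 4$.} We must show that $\E$ is never modular. Assume $\E$ is modular, with structure matrix $(a_{ij})$ lower triangular, ones on the diagonal, relative to $B=\{e_1,\dots,e_n\}$. The plan is to reduce to the three-dimensional classification through subalgebras and quotients, both of which inherit modularity. Each $I_j=\spa\{e_1,\dots,e_j\}$ is an ideal, hence a subalgebra, and each quotient $\E/I_j$ is again supersolvable regular with the lower-right block as structure matrix. More generally, for $p<q<r$ the space $\spa\{e_p,e_q,e_r\}$ becomes a three-dimensional supersolvable regular evolution algebra after quotienting by $\spa\{e_1,\dots,e_{p-1}\}$ and restricting. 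I still need to check that $\spa\{e_1,\dots,e_{p-1},e_p,e_q,e_r\}$ is a subalgebra; this holds whenever rows $q$ and $r$ have zeros in the skipped columns.

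To avoid that issue, I will instead use only the consecutive sections
\[
I_{j+3}/I_j \cong \mathcal{E}_\mathbb{C}^{\mathrm{reg}}(\lambda,\mu,\rho).
\]
Applying Theorem~\ref{th:char_mod} to $I_3$ and to $I_4/I_1$ (both modular) normalises these blocks. After rescaling, the structure matrix of $I_4$ is forced to be
\[
\begin{pmatrix}1&0&0&0\\0&1&0&0\\ \tfrac14&\tfrac14&1&0\\ a&b&c&1\end{pmatrix}
\]
up to an isomorphism of each block. The rescaling $e_i\mapsto \frac1{a_{ii}}e_i$ is already fixed, so the isomorphism type is basically rigid. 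The only extra freedom is reordering $e_1,e_2$ in $I_3$, and permuting basis vectors within each block compatibly.

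The main obstacle is the four-dimensional algebra $I_4$, where I will exhibit a non-quasi-ideal. The model is the proof of Proposition~\ref{prop:cond_mod}(i). The block $I_4/I_1$ must also be of the form $(0,\frac14,\frac14)$, so its rows $2,3,4$ give $a_{32}=0$ — but we have $a_{32}=\frac14$. So I first try to derive a contradiction directly from the shape: compare the normal form of $I_3$, where $e_3^2$ involves both $e_1$ and $e_2$, with that of $I_4/I_1$, where $\overline{e_3}^2=\overline{e_3}+\frac14\overline{e_2}$ up to relabelling. That is, in $I_4/I_1$ the row of $\overline{e_3}$ has a single off-diagonal $\frac14$, and the row of $\overline{e_4}$ has two.

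The possible relabellings are constrained because the flag must consist of ideals. Since $e_1,e_2$ are both ideals (their squares are themselves), a case check on which index plays which role in $I_4/I_1$ should show that the entries cannot match the unique normal form. If some configuration survives, the fallback is explicit. I will solve the one-dimensional subalgebra system via \cite[Lemma~1]{LP_25_regular} in $I_4$ to find $\spa\{u\}$ with full support and $\spa\{e_1\}$ or $\spa\{e_1+e_2\}$. I will then show via \cite[Theorem~2]{LP_25_regular} that their sum admits no natural basis, so it is not a subalgebra. By Remark~\ref{rem:qi_super} this contradicts modularity.
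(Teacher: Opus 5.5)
Your argument is the paper's: modularity of $I_3=\spa\{e_1,e_2,e_3\}$ forces $a_{32}=\frac14$ (indeed $a_{32}\neq0$ already follows from Proposition~\ref{prop:cond_mod}(i)), while the section $I_4/I_1$ has $\lambda$-parameter $a_{32}$ and so would force $a_{32}=0$; this section is exactly the paper's subalgebra $\spa\{\overline{e_2},\overline{e_3},\overline{e_4}\}$ of $\E/\spa\{e_1\}$. Your relabelling worry is unnecessary, because Proposition~\ref{prop:cond_mod}(iii) constrains the entries of any natural basis realising the form \eqref{eq:matrix_sup}, so it applies directly to the basis $\overline{e_2},\overline{e_3},\overline{e_4}$, and you can drop both the case check and the fallback.
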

\begin{proof}
Since every algebra of dimension at most two is modular, and the three-dimensional case has been characterised in Theorem~\ref{th:char_mod}, it suffices to consider the case $n>3$.
	
Let $\mathcal{E}$ be an $n$-dimensional supersolvable regular evolution algebra with natural basis $B=\{e_1,\dots,e_n\}$ and structure matrix $M_B(\mathcal{E})=(a_{ij})$ as in Proposition~\ref{prop:super_triangular}. If $\mathcal{E}$ is modular, then the subalgebra $\spa\{e_1,e_2,e_3\}$ would also be modular, since modularity is preserved under subalgebras. By Theorem~\ref{th:char_mod}, this forces that $a_{21}=0$ and $a_{31}=a_{32}=\frac{1}{4}$.
Now consider the quotient algebra $\mathcal{E}/\spa\{e_1\}$. Its structure matrix with respect to the natural basis $B_{\E/\spa\{e_1\}}=\{\overline{e_2},\dots,\overline{e_n}\}$ is obtained by deleting the first row and the first column of $M_B(\mathcal{E})$. In particular, the entry $(2,1)$ of this matrix equals $\tfrac14$. Consequently, Theorem~\ref{th:char_mod} implies that the three-dimensional subalgebra $\spa\{\overline{e_2},\overline{e_3},\overline{e_4}\}$ is not modular. Hence this quotient algebra is not modular.
Since modularity is preserved under both subalgebras and quotients, we obtain a contradiction. 

Therefore no supersolvable regular evolution algebra of dimension greater than three can be modular, and the result follows.
\end{proof}

\begin{remark}
To conclude, note that although formulated over $\mathbb{C}$, the statements and proofs of Proposition~\ref{prop:cond_mod}and Theorems~\ref{th:char_mod} and~\ref{th:char_sup_reg} remain valid over any algebraically closed field of characteristic different from $2$. 
\end{remark}

\section*{Acknowledgments}
This work was partially supported by the projects PID2020-115155GB-I00 and PID2024-155502NB-I00 granted by MICIU/AEI/10.13039/501100011033; 
and by Xunta de Galicia through the Competitive Reference Groups (GRC), ED431C
2023/31.
The second author was also supported by the predoctoral contract FPU21/05685 from the Ministerio de Ciencia, Innovación y Universidades (Spain).


\begin{thebibliography}{10}
	
	\bibitem{An_94}
	J.~A. Anquela, \emph{On {H}erstein's theorems relating modularity in {$A$} and
		{$A^{(+)}$}}, J. Algebra \textbf{163} (1994), no.~1, 207--218.
	
	\bibitem{BCS_22}
	N.~Boudi, Y.~Cabrera~Casado, and M.~Siles~Molina, \emph{Natural families in
		evolution algebras}, Publ. Mat. \textbf{66} (2022), no.~1, 159--181.
	
	\bibitem{thesis_yolanda}
	Y.~Cabrera~Casado, \emph{Evolution algebras}, PhD thesis, Universidad de
	M\'alaga, 2016.
	
	\bibitem{CKM_19}
	Y.~Cabrera~Casado, M.~Kanuni, and M.~Siles~Molina, \emph{Basic ideals in
		evolution algebras}, Linear Algebra Appl. \textbf{570} (2019), 148--180.
	
	\bibitem{CMMT_25}
	Y.~Cabrera~Casado, D.~Mart\'in~Barquero, C.~Mart\'in~Gonz\'alez, and A.~Tocino,
	\emph{Connecting ideals in evolution algebras with hereditary subsets of its
		associated graph}, Collect. Math. \textbf{76} (2025), no.~2, 357--372.
	
	\bibitem{CKO_19}
	L.~M. Camacho, A.~K. Khudoyberdiyev, and B.~A. Omirov, \emph{On the property of
		subalgebras of evolution algebras}, Algebr. Represent. Theory \textbf{22}
	(2019), no.~2, 281--296.
	
	\bibitem{CLOR_14}
	J.~M. Casas, M.~Ladra, B.~A. Omirov, and U.~A. Rozikov, \emph{On evolution
		algebras}, Algebra Colloq. \textbf{21} (2014), no.~2, 331--342.
	
	\bibitem{EL_15}
	A.~Elduque and A.~Labra, \emph{Evolution algebras and graphs}, J. Algebra Appl.
	\textbf{14} (2015), no.~7, 1550103, 10.
	
	\bibitem{GP_25_complete}
	X.~García-Martínez and A.~Pérez-Rodríguez, \emph{A note on complete
		evolution algebras}, 	\href{
		https://doi.org/10.48550/arXiv.2512.12418}{arXiv:2512.12418} (2025).
	
	\bibitem{Ge_76}
	A.~G. Ge{\u{\i}}n, \emph{Semimodular {L}ie algebras}, Sib. Math. J. \textbf{17}
	(1976), no.~2, 189--193.
	
	\bibitem{Gr_98}
	G.~Gr\"atzer, \emph{General lattice theory}, second ed., Birkh\"auser Verlag,
	Basel, 1998.
	
	\bibitem{Ko_65}
	B.~Kolman, \emph{Semi-modular {L}ie algebras}, J. Sci. Hiroshima Univ. Ser. A-I
	Math. \textbf{29} (1965), 149--163.
	
	\bibitem{LPP_25}
	M.~Ladra, P.~P\'aez-Guill\'an, and A.~P\'erez-Rodr\'iguez, \emph{On the
		subalgebra lattice of solvable evolution algebras}, Rev. R. Acad. Cienc.
	Exactas F\'is. Nat. Ser. A Mat. RACSAM \textbf{119} (2025), no.~3, Paper No.
	83, 18 pp.
	
	\bibitem{LP_25_regular}
	M.~Ladra and A.~P\'erez-Rodr\'iguez, \emph{Regular evolution algebras are
		closed under subalgebras}, C. R. Math. Acad. Sci. Paris \textbf{363} (2025),
	1461--1465.
	
	\bibitem{MPS_21}
	N.~Maletesta, P.~P\'aez-Guill\'an, and S.~Siciliano, \emph{Restricted {L}ie
		algebras having a distributive lattice of restricted subalgebras}, Linear
	Multilinear Algebra \textbf{69} (2021), no.~16, 3112--3120.
	
	\bibitem{PST_23}
	P.~P\'aez-Guill\'an, S.~Siciliano, and D.~A. Towers, \emph{On the subalgebra
		lattice of a restricted {L}ie algebra}, Linear Algebra Appl. \textbf{660}
	(2023), 47--65.
	
	\bibitem{ST_22}
	S.~Siciliano and D.~A. Towers, \emph{On the subalgebra lattice of a {L}eibniz
		algebra}, Comm. Algebra \textbf{50} (2022), no.~1, 255--267.
	
	\bibitem{Tian_08}
	J.~P. Tian, \emph{Evolution algebras and their applications}, Lecture Notes in
	Mathematics, vol. 1921, Springer, Berlin, 2008.
	
	\bibitem{TV_06}
	J.~P. Tian and P.~Vojt{\v{e}}chovsk{\'y}, \emph{Mathematical concepts of
		evolution algebras in non-{M}endelian genetics}, Quasigroups Related Systems
	\textbf{14} (2006), no.~1, 111--122.
	
\end{thebibliography}
\end{document}